\numberwithin{equation}{section}
\newtheorem{theorem}{Theorem}[section]
\newtheorem{proposition}{Proposition}[section]
\newtheorem{lemma}{Lemma}[section]
\newcommand{\fc}{\mathcal{C}}
\newcommand{\D}{\mathbb{D}}
\newcommand{\fp}{\mathcal{P}}
\theoremstyle{definition}
\newcommand{\C}{\mathbb{C}}
\newcommand{\N}{\mathbb{N}}
\newcommand{\Z}{\mathbb{Z}}
\newcommand{\fo}{\mathcal{O}}
\newcommand{\A}{\mathcal{A}}
\newcommand{\Smooth}{\A^{\infty}(\Omega)}
\newcommand{\poly}{\A^{\infty}(\fp)}
\newcommand\norm[1]{\left\lVert#1\right\rVert}
\newcommand{\abs}[1]{\left\vert#1\right\vert}
\newcommand{\normp}[1]{{\left\vert\kern-0.25ex\left\vert\kern-0.25ex\left\vert #1 
    \right\vert\kern-0.25ex\right\vert\kern-0.25ex\right\vert}}
\newcommand{\floor}[1]{\left\lfloor #1 \right\rfloor}
\begin{document}

\title[Laurent series]{Laurent series of holomorphic functions smooth up to the boundary}

\author{Anirban Dawn}
%\date{October 2019}
\address[Anirban Dawn]{Department of Mathematics, Central Michigan University\\%
             1200 S Franklin St, Mt Pleasant, MI 48859, USA}%
\email{dawn1a@cmich.edu}
\begin{abstract}
 It is shown that the Laurent series of a holomorphic function smooth up to the boundary on a Reinhardt domain in $\C^n$ converges unconditionally to the function in the Fr\'{e}chet topology of the space of functions smooth up to the boundary.  \end{abstract}
\maketitle

%%%%%%%%%%%%%%%%%%%%%%%%%%%%%%%%%%%%%%%%%%%%%%%%%%%% Introduction %%%%%%%%%%%%%%%%%%%%
\section{introduction}
Let $\Omega \subset \C^n$ be a domain i.e. $\Omega$ is an open and connected subset of $\C^n$. Denote by $\Smooth$ the space of holomorphic functions smooth up to the boundary of $\Omega$, i.e. the space of holomorphic functions whose derivatives of all orders can be extended continuously up to the boundary. For a sequence of functions $\{f_j\} \subset \Smooth$, $f_j \rightarrow f$ in $\Smooth$ means that for every compact subset $K \subset \overline{\Omega}$, $f_j \rightarrow f$ uniformly on $K$ along with all partial derivatives. In particular, if $\Omega$ is bounded, then $f_j \rightarrow f$ in $\Smooth$ means that $f_j \rightarrow f$ uniformly on $\overline{\Omega}$ along with all partial derivatives.\\
Recall that a domain $\Omega \subset \C^n$ is called a \emph{Reinhardt domain} if for $z = (z_1,\cdots,z_n) \in \Omega$, one has $(\lambda_1 z_1, \cdots, \lambda_n z_n) \in \Omega$, where $\abs{\lambda_j} = 1$ for $j = 1,2,\cdots,n$. For a detailed exposition of Reinhardt domains, see \cite{MR2396710}. Let $\Omega$ be a Reinhardt domain and $f \in \fo(\Omega)$, the space of holomorphic functions on $\Omega$. It is well known that $f$ admits a unique Laurent series expansion which converges absolutely and uniformly on compact subsets of $\Omega$ to the function $f$, i.e. the Laurent series of $f$ converges to $f$ in the Fr\'{e}chet topology of $\fo(\Omega)$ (see \cite[p. 46]{MR847923}). The focus of this paper is to prove a result similar to this, for the space $\Smooth$.
%%%%%%%%%%%% Main theorem %%%%%%%%%%
\begin{theorem}\label{theorem1}
Let $\Omega$ be a Reinhardt domain in $\C^n$ and $f \in \Smooth$. Then the Laurent series of $f$ converges unconditionally to the function $f$ in the topology of $\Smooth$.
\end{theorem}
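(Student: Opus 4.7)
The plan is to exploit the toral symmetry of the Reinhardt domain and reinterpret the Laurent expansion as a Fourier expansion on $\T^n$, then read the required decay of Laurent coefficients off the $C^\infty$ regularity of $f$ up to $\partial\Omega$. Consider the rotated function
$$F:\overline{\Omega}\times\T^n\to\C,\qquad F(z,\theta)=f(e^{i\theta_1}z_1,\dots,e^{i\theta_n}z_n).$$
Since $\overline{\Omega}$ is Reinhardt, the map $(z,\theta)\mapsto(e^{i\theta_1}z_1,\dots,e^{i\theta_n}z_n)$ carries $\overline{\Omega}\times\T^n$ into $\overline{\Omega}$; since $f\in\Smooth$, every mixed partial of $F$ in $(z,\bar z,\theta)$ extends continuously to $\overline{\Omega}\times\T^n$.

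For $z\in\Omega$, the Laurent series $f(w)=\sum_{\alpha\in\Z^n}c_\alpha w^\alpha$ converges uniformly on the compact torus $\{e^{i\theta}\cdot z:\theta\in\T^n\}\subset\Omega$, so the $\alpha$-th Fourier coefficient in $\theta$ of $F(z,\cdot)$ is exactly $c_\alpha z^\alpha$; by continuity this identity persists for $z\in\overline{\Omega}$. Integrating by parts in $\theta$, for any compact $K\subset\overline{\Omega}$, any multi-index $\beta$ in $(z,\bar z)$, and any integer $k\ge 0$,
$$(1+|\alpha|^2)^k\,\partial^\beta(c_\alpha z^\alpha)=\frac{1}{(2\pi)^n}\int_{\T^n}\bigl[(1-\Delta_\theta)^k\partial^\beta F\bigr](z,\theta)\,e^{-i\alpha\cdot\theta}\,d\theta.$$
Since the integrand is jointly continuous on the compact set $K\times\T^n$, the right side is bounded by $C_{K,\beta,k}:=\sup_{K\times\T^n}|(1-\Delta_\theta)^k\partial^\beta F|<\infty$ uniformly in $z\in K$. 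Choosing $k>n/2$ makes $\sum_{\alpha\in\Z^n}(1+|\alpha|^2)^{-k}$ finite, so $\sum_\alpha c_\alpha z^\alpha$ converges absolutely in each Fr\'echet seminorm $\norm{\cdot}_{C^m(K)}$, and hence unconditionally in $\Smooth$; fiberwise Fourier inversion on $\T^n$ evaluated at $\theta=0$ identifies the sum with $f$.

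The main obstacle is the passage from the interior $\fo(\Omega)$-convergence of the Laurent series---where Cauchy estimates $|c_\alpha|\le M r^{-\alpha}$ yield geometric decay against any strictly smaller torus---to convergence in $\Smooth$, where that geometric margin collapses on $\partial\Omega$. The Fourier reformulation replaces decay from a \emph{strict} radial inequality by decay from \emph{smoothness in an auxiliary variable}: integration by parts in $\theta$, which is permissible precisely because the Reinhardt symmetry provides a free $\T^n$-action tangent to $\overline{\Omega}$, converts $C^\infty$ regularity of $f$ up to $\partial\Omega$ into superpolynomial decay of $|\partial^\beta(c_\alpha z^\alpha)|$ uniformly on compacta of $\overline{\Omega}$. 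Once this decay is in hand, absolute---and hence unconditional---convergence in $\Smooth$ is immediate.
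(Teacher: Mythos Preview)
Your argument is correct and rests on the same mechanism as the paper's: write $c_\alpha z^\alpha$ as the $\alpha$-th Fourier coefficient of the angular function $\theta\mapsto f(e^{i\theta}\cdot z)$ and integrate by parts in $\theta$ to convert $C^\infty$ regularity of $f$ on $\overline{\Omega}$ into summable decay in $\alpha$, uniformly on compacta of $\overline{\Omega}$. The difference is in the implementation. The paper first covers $\Omega$ by polyannuli (Lemma~\ref{lemma6}), introduces the auxiliary seminorms $\normp{\cdot}_{k,\Omega}$ (Lemma~\ref{lemma5}), and on each polyannulus integrates by parts twice in each $\theta_j$ separately, with a case split on $\alpha_j\in\{0,1\}$, to obtain the product-type decay $\prod_j \mu_{\alpha_j-k}$ of Proposition~\ref{proposition6}; it then patches the polyannulus estimates together. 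You bypass both the polyannulus covering and the auxiliary norms by working directly with compact $K\subset\overline{\Omega}$ and the torus action on $\overline{\Omega}$, and you replace the coordinate-wise integration by parts with a single application of $(1-\Delta_\theta)^k$, obtaining the isotropic decay $(1+|\alpha|^2)^{-k}$. Your packaging is shorter and avoids the bookkeeping of the $\alpha_j\in\{0,1\}$ cases and the covering lemma; the paper's route, on the other hand, yields the somewhat sharper anisotropic bound $\normp{c_\alpha e_\alpha}_{k,\Omega}\le C_\alpha\,\normp{f}_{k+2,\Omega}$ with explicit constants depending only on two extra orders of differentiability, rather than on $k>n/2$ applications of the full $\theta$-Laplacian.
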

%%%%%%%%%%%%%%%%%%%%%%%%%%%%%
We say a formal series $\sum_{\alpha \in \Gamma} x_{\alpha}$, where $\Gamma$ is a countable index set, in a locally convex topological vector space (LCTVS) $X$ is \emph{unconditionally convergent} if for every bijection $\sigma: \N \coloneqq \{0,1,2,\cdots\} \rightarrow \Gamma$, the series $\sum_{j=0}^{\infty} x_{\sigma(j)}$ converges in the topology of $X$ (see \cite[p. 9]{MR1442255}).
\\
%%%%%%%%%%%%%%%%%%%%%%%%%%%%%%%%%%%%%%%%%%
Convergence results similar to Theorem \ref{theorem1} for other classical function spaces have been proved. For $1 < p < \infty$, it is well known that the partial sums of the Taylor series of $f$ in $H^p(\D)$, the Hardy space on the unit disc in $\C$, converges to $f$ in the $H^p(\D)$ norm (see \cite[p. 104-110]{MR628971}). For the same range of $p$, convergence of partial sums of the Taylor series of $f$ in $A^p(\D)$, the space of holomorphic $L^p$ functions on the unit disc, has been proved in \cite{MR1091512}. For $p=1$, the sequence of partial sums does not converge in $H^1(\D)$ and $A^1(\D)$ norms. However, it has been shown in \cite{MR3861907} that the sequence of partial sums of $f \in H^1(\D)$ is norm convergent in the weaker norm $A^1(\D)$. A more general result can be found in \cite{MR3873548} where it is proved that for a bounded Reinhardt domain $\mathcal{R}$ in $\C^n$, the \enquote{square partial sums} of the Laurent series of $f$ in $A^p(\mathcal{R})$ converges to the function $f$ in the $A^p(\mathcal{R})$ norm. Notice that for a general Reinhardt domain $\Omega$, the convergence of the Laurent series in $\Smooth$ and $\fo(\Omega)$ is unconditional, which is not the case in $H^p(\D)$ or $A^p(\mathcal{R})$.\\ 
%%%%%%%%%%%%%%%%%%%%%%%%%%%%%%%%%%%%%%%%%
Theorem \ref{theorem1} is interesting because of the intrinsic importance of the space $\Smooth$ in complex analysis. For example, it is known that each smoothly bounded pseudoconvex domain $\Omega$ is a so called $\Smooth$-domain of holomorphy (see \cite{MR628348} and \cite{MR578928} for details). However, it is also known that pseudoconvex domains with non-smooth boundaries may not be $\Smooth$-domain of holomorphy. This was first noticed for Hartogs triangle $\{z=(z_1, z_2) : \abs{z_1} < \abs{z_2} < 1\} \subset \C^2$ by Sibony (see \cite{MR385164}) and generalised recently by Chakrabarti to Reinhardt domains in $\C^n$ with $0$ as a boundary point (see \cite{MR3981123}).\\ 
%%%%%%%%%%%%%%%%%%%%%%%%%%%%%%%%%%%%%%%%%%%%%%%%%
The paper is organised as follows. In Section \ref{unconditionalconvergence} we define the notion of absolute convergence of a series in $\Smooth$ and we prove that absolute convergence of a series in $\Smooth$ implies unconditional convergence. In addition, we show that absolutely convergent series in $\Smooth$ converges in the net of partial sums (see Section \ref{unconditionalconvergence} for more details). At the end we prove Theorem \ref{theorem1} in Section \ref{mainresult} by showing that the Laurent series of $f \in \Smooth$ converges absolutely (and therefore unconditionally) in $\Smooth$ to the function $f$.   
%We show that the convergence of the Laurent series is absolute in $\Smooth$ (cf. Section \ref{absoluteconvergence}) and absolute convergence in $\Smooth$ implies unconditional convergence. Moreover, we prove that absolute convergence implies convergence in the net of all partial sums (see Section \ref{unconditionalconvergence} for more details).

%%%%%%%%%%%%%%%%%%%%%%%%%%%%%%%%%%%%%%%%%%%%%%%%%%%%% Acknowledgements %%%%%%%%%%%%%%%%%%%%%
\subsection{Acknowledgements}
The author would like to thank Debraj Chakrabarti for his valuable suggestions, encouragement and support to this work. The author is grateful to the Department of Mathematics, Central Michigan University for providing research assistantship during this work. Also, many thanks to the referee for valuable feedback and comments.
%%%%%%%%%%%%% Preliminary  Section %%%%%%%%%%%%%%%%%%%%%%%%%%%%%%%%%%%%%%%%%%%%%%%%%%%%%%%%%%%%%%%%%%%%%%%%
\section{Preliminaries}
%%%%%%%%%%%%%%%%%%%%%%%%%
%In this article we denote the set of positive integers by $\Z^+$ and the set of non-negative integers by $\N$. 

%%%%%%%%%%%%%%%%%%%%%%%%%%%%%%%%%%%%%%%%%%%%%%%%%%%
\subsection{The topology of $\Smooth$}\label{gensem}
Let $\Omega \subset \C^n$ be a domain. We now describe the topology of the space $\Smooth$, the space of holomorphic functions smooth up to the boundary of $\Omega$. First, assume $\Omega$ is bounded. Then $\Smooth = \bigcap_{k \in \N} \A^k(\Omega)$, where for every $k \in \N$, $\A^k(\Omega) \coloneqq \fc^{k}(\overline{\Omega}) \bigcap \fo(\Omega)$ and $\fc^k(\overline{\Omega})$ denotes the space of $k$-times continuously differentiable $\C$-valued functions whose derivatives up to order $k$ can be extended continuously up to the boundary of $\Omega$. The space $\Smooth$ is a Fr\'{e}chet space and its Fr\'{e}chet topology is generated by the $\fc^k$-seminorms given by,
\begin{equation}\label{eq:1}
\norm{f}_{k,\Omega} \coloneqq \sup \bigg \{ \abs{D^{\alpha} f(z)} : z \in \Omega, [\alpha] \leq k \bigg \}    
\end{equation}
where $k$ ranges over $\N$, $\alpha = (\alpha_1, \cdots , \alpha_n) \in \N^n$ is a multi-index with length $[\alpha] = \sum_{j=1}^n \alpha_j$, and 
\begin{equation}\label{eq:2}
D^{\alpha} f =  \frac{\partial^{[\alpha]} f}{\partial z_1^{\alpha_1}...\partial z_n^{\alpha_n}}.    
\end{equation}
%When the domain is clear from the context, we denote the $\fc^k$-seminorms as $\norm{\cdot}_k$. 
Let $(X, \tau)$ be an LCTVS. Recall that a collection of continuous seminorms $\{p_k : k \in \Lambda\}$ on $X$, where $\Lambda$ is an index set, \emph{generates} the topology $\tau$ if for every continuous seminorm $p$ on $X$, there exists a finite subset $F \subset \Lambda$ and a $C > 0$ such that 
\begin{equation}\label{eq:1B}
p(x) \leq C \cdot \max_{k \in F} \hspace{1mm} p_{k}(x) \hspace{3mm} \text{for all $x \in X$}.
\end{equation}
Now, assume $\Omega$ is unbounded. For $m \in \N$, let $\Omega_m = \Omega \bigcap P_m$ where $P_m = \{z: \abs{z_j} < m \hspace{1mm} \text{for all $j$}\}$ is the polydisc of radius $m$. Then $\Omega_m$ is bounded for each $m$ and we write $\Omega = \bigcup_{m = 0}^{\infty} \Omega_m$. The Fr\'{e}chet topology of $\Smooth$ is generated by the collection of seminorms $\{\norm{f}_{k, \Omega_m}: k, m \in \N\}$, where
\begin{equation}\label{eq:1A}
\norm{f}_{k,\Omega_m} \coloneqq \sup \bigg \{ \abs{D^{\alpha} f(z)} : z \in \Omega_m, [\alpha] \leq k \bigg \}.    
\end{equation} 
Note that for a sequence of functions $\{f_N\} \subset \Smooth$, $f_N \rightarrow f$ in $\Smooth$ as $N \rightarrow \infty$ if and only if $f_N \rightarrow f$ in $\A^{\infty}(\Omega_m)$ for every $m \in \N$, as $N \rightarrow \infty$.\\
Now we describe another collection of seminorms that generates the same locally convex topology of $\Smooth$, where $\Omega \subset \C^n$ is bounded. For $\alpha \in \Z^n$, we define
\begin{equation}\label{eq:22}
\abs{\alpha}_{\infty} \coloneqq \max \big \{ \abs{\alpha_j}, 1 \leq j \leq n \big \}.
\end{equation}
For $k \in \N$, define
\begin{equation}\label{eq:23}
 \widetilde{\A}^k(\Omega) \coloneqq \Big \{f \in \A^k(\Omega) : D^{\alpha}(f) \in \A^0(\Omega) \hspace{2mm}\text{where $\abs{\alpha}_{\infty} \leq k$} \Big \},   
\end{equation}
where $\alpha = (\alpha_1, \cdots, \alpha_n)$ is a multi-index in $\N^n$ and $D^{\alpha}$ is defined in \eqref{eq:2}. Note that  $\widetilde{\A}^k(\Omega)$ is a Banach space with the norm,
\begin{equation}\label{eq:24}
\normp{f}_{k, \Omega} = \sup  \bigg \{\abs{D^{\alpha} f(z)} : z \in \Omega , \abs{\alpha}_{\infty} \leq k \bigg \}.   \end{equation}
When $n=1$, $\widetilde{\A}^k(\Omega)$ coincides with $\A^k(\Omega)$. Observe that for $n \geq 2$, $\A^{nk}(\Omega) \subsetneq \widetilde{\A}^k(\Omega) \subsetneq \A^k(\Omega)$. Moreover, for each $k \in \N$, the inclusion maps $\A^{nk}(\Omega) \overset{i}{\hookrightarrow} \widetilde{\A}^k(\Omega) \overset{i}{\hookrightarrow} \A^k(\Omega)$ are bounded with norm $1$. The next result is now immediate.
%%%%%%%%%%%%%%%%%%%%%% Lemma %%%%%%%%%%%%%%%%%%%%%%%%%%%%%%%%%%%%%%%%%%%%%%%%%%%%%%%%%%%%%%%%
\begin{lemma}\label{lemma5}
For a bounded $\Omega \subset \C^n$, the collection of seminorms $\{\normp{\cdot}_{k, \Omega} : k \in \N\}$ generates the same Fr\'{e}chet topology of $\Smooth$ as the collection $\{\norm{\cdot}_{k, \Omega} : k \in \N\}$, the $\fc^k$-seminorms of $\Omega$.
\end{lemma}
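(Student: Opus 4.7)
The plan is to establish the two-sided pointwise comparison
\begin{equation*}
\norm{f}_{k,\Omega} \leq \normp{f}_{k,\Omega} \leq \norm{f}_{nk,\Omega}, \qquad f \in \Smooth, \ k \in \N,
\end{equation*}
and then invoke the criterion \eqref{eq:1B} to conclude that the two seminorm families generate the same locally convex topology on $\Smooth$.

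Both inequalities are driven by a simple comparison of the index sets appearing in \eqref{eq:1} and \eqref{eq:24}. For a multi-index $\alpha \in \N^n$, the condition $[\alpha] = \sum_{j=1}^n \alpha_j \leq k$ immediately forces $\alpha_j \leq k$ for each $j$, and hence $\abs{\alpha}_{\infty} \leq k$; conversely, $\abs{\alpha}_{\infty} \leq k$ yields $[\alpha] \leq nk$. Thus
\begin{equation*}
\{\alpha \in \N^n : [\alpha] \leq k\} \subseteq \{\alpha \in \N^n : \abs{\alpha}_{\infty} \leq k\} \subseteq \{\alpha \in \N^n : [\alpha] \leq nk\},
\end{equation*}
and taking suprema of $\abs{D^{\alpha}f(z)}$ over $z \in \Omega$ and $\alpha$ ranging through these nested sets produces the displayed chain. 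This is essentially a restatement of the norm-one inclusions $\A^{nk}(\Omega) \hookrightarrow \widetilde{\A}^k(\Omega) \hookrightarrow \A^k(\Omega)$ already recorded just before the lemma.

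To conclude, observe that both families of seminorms are monotonically nondecreasing in $k$, so in verifying \eqref{eq:1B} one may always take the finite set $F$ to be a singleton. Given any $k \in \N$, the left inequality shows that $\normp{\cdot}_{k,\Omega}$ dominates $\norm{\cdot}_{k,\Omega}$ (with constant $1$), while the right inequality shows that $\norm{\cdot}_{nk,\Omega}$ dominates $\normp{\cdot}_{k,\Omega}$. Hence each family is majorized, in the sense of \eqref{eq:1B}, by a single member of the other family, and the two collections generate the same Fr\'echet topology on $\Smooth$. As the author flags the conclusion as immediate, I do not anticipate any genuine obstacle; the only point requiring care is the elementary comparison of the two multi-index conditions, and a mild subtlety in how one phrases the generation criterion when one of the families consists of seminorms (not norms) on a larger ambient space, which is handled by restriction to $\Smooth$.
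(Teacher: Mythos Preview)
Your proof is correct and follows essentially the same approach as the paper: both establish the chain $\norm{f}_{k,\Omega} \leq \normp{f}_{k,\Omega} \leq \norm{f}_{nk,\Omega}$ by comparing the index sets $\{[\alpha]\leq k\}\subseteq\{\abs{\alpha}_\infty\leq k\}\subseteq\{[\alpha]\leq nk\}$, which is exactly the content of the norm-one inclusions noted just before the lemma. The paper simply displays the two inequalities and stops, leaving the invocation of \eqref{eq:1B} implicit, whereas you spell it out.
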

\begin{proof}
Let $k \in \N$. Note that for every $f \in \Smooth$,
\begin{align*}
 \normp{f}_{k, \Omega} &= \sup  \bigg \{\abs{D^{\alpha} f(z)} : z \in \Omega , \abs{\alpha}_{\infty} \leq k \bigg \}  \\
 &= \sup  \bigg \{\abs{D^{\alpha} f(z)} : z \in \Omega , \alpha_j \leq k \hspace{2mm} \text{for all $j = 1,2,\cdots,n$}\bigg \} \\
 & \leq \sup  \bigg \{\abs{D^{\alpha} f(z)} : z \in \Omega , [\alpha] \leq nk \bigg \} = \norm{f}_{nk, \Omega}.
\end{align*}
Also observe that for every $f \in \Smooth$,
\begin{align*}
 \norm{f}_{k, \Omega} &= \sup  \bigg \{\abs{D^{\alpha} f(z)} : z \in \Omega , [\alpha] \leq k \bigg \}  \\
 &\leq \sup  \bigg \{\abs{D^{\alpha} f(z)} : z \in \Omega , \alpha_j \leq k \hspace{2mm} \text{for all $j = 1,2,\cdots,n$}\bigg \} \\
 &= \sup  \bigg \{\abs{D^{\alpha} f(z)} : z \in \Omega , \abs{\alpha}_{\infty} \leq k \bigg \} = \normp{f}_{k, \Omega}.
\end{align*}
\end{proof}
%%%%%%%%%%%%%%%%%%%%%%%%%%%%%%%%%%%%%%%%%%%%%%%%%%%%%%%%%%%%%%%%%%%%%%%%%%%%%%%%%%%%%%%%%%%%%%%%%%

%%%%%%%%%%%%%%%%%%%%%%%%%%%%%%%%%%%%%%%%%%%%%%%%%%%%% Absolute convergence%%%%%%%%%%%%%%%%
\subsection{Absolute and unconditional convergence}\label{unconditionalconvergence}
A formal series $\sum_{\alpha \in \Gamma} x_{\alpha}$ in an LCTVS $X$, where $\Gamma$ is a countable index set, is said to be \emph{absolutely convergent} if there exists a bijection
\begin{equation}\label{eq:40D}
\sigma: \N \rightarrow \Gamma    
\end{equation}
such that for every continuous seminorm $p$ on $X$,
\begin{equation*}
\sum_{j=0}^{\infty} p (x_{\sigma(j)})    
\end{equation*}
is a convergent series of non-negative real numbers. Let $\mathsf{P}$ be a collection of continuous seminorms on $X$ that generates the locally convex topology of $X$. Then, to prove absolute convergence of the series $\sum_{\alpha \in \Gamma} x_{\alpha}$, it is sufficient to show that there exists a bijection $\sigma: \N \rightarrow \Gamma$ such that for every $p \in \mathsf{P}$,  the series $\sum_{j=0}^{\infty} p (x_{\sigma(j)}) < \infty$. A discussion of absolute and unconditional convergence for series in Banach spaces can be found in \cite{MR1442255}.\\
Let $(X, \tau)$ be an LCTVS and $(\Gamma, \geq)$ be a directed set. A net $(x_{\alpha})_{\alpha \in \Gamma}$ in $X$ is said to be a \emph{Cauchy net} if for every $\epsilon > 0$ and every continuous seminorm $p$ on $X$, there exists $\gamma \in \Gamma$ such that whenever $\alpha, \beta \in \Gamma$ and $\alpha, \beta \geq \gamma$, $p(x_{\alpha} - x_{\beta}) < \epsilon$. The net $(x_{\alpha})$ \emph{converges} to an element $x \in X$ if for every $\epsilon > 0$ and every continuous seminorm $p$ on $X$, there exists $\gamma \in \Gamma$ such that whenever $\alpha \geq \gamma$, we have $p(x_{\alpha} - x) < \epsilon$. Let $\mathsf{P}$ be a family of continuous seminorms on $X$ that generates the topology $\tau$. It follows from \eqref{eq:1B} that we can give an alternative definition of Cauchy net involving generating family of seminorms: a net $(x_{\alpha})_{\alpha \in \Gamma}$ in $X$ is said to be a \emph{Cauchy net} if for every $\epsilon > 0$ and every $p \in \mathsf{P}$, there exists $\gamma \in \Gamma$ such that whenever $\alpha, \beta \in \Gamma$ and $\alpha, \beta \geq \gamma$, $p(x_{\alpha} - x_{\beta}) < \epsilon$. An alternative definition of convergence of Cauchy nets involving generating seminorms can be given similarly. The space $X$ is said to be \emph{complete} if every Cauchy net of $X$ converges.
%%%%%%%%%%%%%%%%% Lemma %%%%%%%%%%%%%%%%%%%
\begin{lemma}\label{lemma6A}
In a complete LCTVS, an absolutely convergent series is unconditionally convergent.
\end{lemma}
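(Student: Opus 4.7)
The plan is to show that for any bijection $\tau: \mathbb{N} \to \Gamma$, the sequence of partial sums $S_N = \sum_{j=0}^N x_{\tau(j)}$ is Cauchy, and then invoke completeness of $X$.

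The first step is to upgrade absolute convergence from a statement about one bijection $\sigma$ to a statement about every bijection. Fix a continuous seminorm $p$ on $X$. By the hypothesis of absolute convergence applied to $\sigma$, the non-negative scalar series $\sum_{j=0}^\infty p(x_{\sigma(j)})$ converges. Since any rearrangement of a convergent series of non-negative real numbers converges (to the same sum), it follows that for every bijection $\tau: \mathbb{N} \to \Gamma$ we have $\sum_{j=0}^\infty p(x_{\tau(j)}) < \infty$. Thus the property of being absolutely convergent does not depend on the choice of bijection, and we may fix an arbitrary $\tau$ from now on.

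The second step is to verify that the partial sums $S_N = \sum_{j=0}^N x_{\tau(j)}$ form a Cauchy sequence in $X$. Let $p$ be any continuous seminorm and $\epsilon>0$. By the previous step, $\sum_{j=0}^\infty p(x_{\tau(j)})$ converges, so there exists $N_0$ such that for all $N\ge M\ge N_0$,
\begin{equation*}
\sum_{j=M+1}^{N} p(x_{\tau(j)}) < \epsilon.
\end{equation*}
By the triangle inequality for seminorms,
\begin{equation*}
p(S_N - S_M) = p\!\left(\sum_{j=M+1}^{N} x_{\tau(j)}\right) \le \sum_{j=M+1}^{N} p(x_{\tau(j)}) < \epsilon,
\end{equation*}
so $(S_N)$ is a Cauchy sequence with respect to $p$. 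Since $p$ was arbitrary, $(S_N)$ is Cauchy in $X$; regarded as a net indexed by the directed set $\mathbb{N}$, it is a Cauchy net. By completeness of $X$, it converges to some element of $X$, which is the sum of the series $\sum_{j=0}^\infty x_{\tau(j)}$. As $\tau$ was arbitrary, the series is unconditionally convergent.

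I do not anticipate a genuine obstacle; the only point requiring a bit of care is the distinction between Cauchy sequences and Cauchy nets. Since the paper defines completeness via nets, one must note that a Cauchy sequence is automatically a Cauchy net (the index set $\mathbb{N}$ is directed), so completeness of $X$ indeed delivers convergence of the sequence of partial sums.
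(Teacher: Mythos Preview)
Your proof is correct. Both you and the paper invoke the Cauchy criterion and completeness, but the organization differs. The paper first shows that the partial sums along the distinguished bijection $\sigma$ are Cauchy, obtains a limit $s$, and then argues separately---via a ``cancellation'' argument choosing $u$ so that $\{0,\dots,N_0\}\subset\{\tau(0),\dots,\tau(u)\}$---that any other rearrangement $s_k^\tau$ converges to the \emph{same} $s$. You instead observe up front that a convergent series of non-negative reals may be rearranged, so $\sum_j p(x_{\tau(j)})<\infty$ for \emph{every} bijection $\tau$, and then apply the Cauchy argument directly to an arbitrary $\tau$. Your route is slightly more streamlined and suffices for the paper's definition of unconditional convergence (which requires only that each rearrangement converge, not that the limits agree); the paper's argument has the small bonus of exhibiting a common limit for all rearrangements.
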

%%%%%%%%%%%%%%%%%%% Proof
\begin{proof}
Let $\sum_{\alpha \in \Gamma} x_{\alpha}$ be an absolutely convergent series in a complete LCTVS $X$. So, there exists a bijection $\sigma: \N \rightarrow \Gamma$ such that for all continuous seminorm $p$ on $X$, the series $\sum_{j=0}^{\infty} p(x_{\sigma(j)})$ converges. Let $y_j = x_{\sigma(j)}$ and $s_k = \sum_{j=0}^k y_j$. Since $\sum_{j=0}^{\infty} p(y_j)$ converges, for $\epsilon > 0$ there exists $N_0 \in \N$ such that whenever $m, \ell \in \N$ with $m \geq \ell \geq N_0$, $\sum_{j = \ell+1}^{m} p(y_j) < \epsilon$. Therefore for $m \geq \ell \geq N_0$,
\begin{equation}\label{eq:60}
p(s_m - s_{\ell}) = p \Big (\sum_{j=\ell+1}^{m} y_j \Big ) \leq \sum_{j=\ell +1}^{m} p(y_j) < \epsilon.   \end{equation}
It follows from \eqref{eq:60} that the net $\{s_k\}$ is Cauchy in a complete LCTVS $X$, with directed set ($\N, \geq$), and therefore converges. Let $s_k \rightarrow s$ as $k \rightarrow \infty$. In order to complete the proof, it suffices to show that for every bijection $\tau: \N \rightarrow \N$, the series $\sum_{j=0}^{\infty} y_{\tau(j)}$ converges to the same limit $s$. Let $s_k^{\tau} = \sum_{j=0}^{k} y_{\tau(j)}$. We show $s_k^{\tau} \rightarrow s$ as $k \rightarrow \infty$. Choose $u \in \N$ such that the set of integers $\{0, 1,2,\cdots,N_0\}$ is contained in the set $\{\tau(0), \tau(1), \cdots, \tau(u)\}$. Then, if $k > u$, the elements $y_1, \cdots, y_{N_0}$ get cancelled in the difference $s_k - s_k^{\tau}$ and we have $p(s_k - s_k^{\tau}) < \epsilon$ by \eqref{eq:60}. This proves that the sequence $\{s_k\}$ and $\{s_k^{\tau}\}$ converges to the same sum. So, $s_k^{\tau} \rightarrow s$ as $k \rightarrow \infty$.
\end{proof}
%%%%%%%%%%%%%%%%%%%%%%%%%%%%%%%%%%%%%%%%%%%%%%%%%%%%%%%%%%%%%%%%%%%%%%%%%%%%%%%%%%%%%%%% New section -- Convergence in the net of partial sums %%%%%%%%%%%%%%%%%%%%%%%%%%%%%%%%%%%%%%%%%%
\subsection{Convergence in the net of partial sums}\label{partialsums}
Let $(\mathfrak{F}(\Gamma), \subset)$ be the directed set of all finite subsets of a countable index set $\Gamma$ with inclusion `$\subset$' as its relation. Let $X$ be an LCTVS. The net $\mathfrak{F}(\Gamma) \rightarrow X$ defined by
\begin{equation}\label{eq:4A}
I \mapsto \sum_{\alpha \in I} x_{\alpha}    
\end{equation}
is said to be the \emph{net of partial sums} of the formal series $\sum_{\alpha \in \Gamma} x_{\alpha}$ in $X$. 
%%%%%%%%%%%%%%%%%%%%%%%%%%%%%%%%%%%%%%%%%%%%%%%%%%% Lemma %%%%%%%%%%%%%%%%%%
\begin{lemma}\label{lemma6AA}
In a complete LCTVS, the net of partial sums of an absolutely convergent series converges.
\end{lemma}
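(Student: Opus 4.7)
The plan is to establish directly that the net of partial sums $\{s_I\}_{I \in \mathfrak{F}(\Gamma)}$ defined by $s_I = \sum_{\alpha \in I} x_{\alpha}$ is a Cauchy net in $X$; since $X$ is complete, convergence follows immediately. In principle one could instead try to leverage Lemma \ref{lemma6A} and show that the net must converge to the same limit as the (unconditionally convergent) sequence of partial sums along $\sigma$, but the direct Cauchy argument is cleaner and does not require proving equality of two limits.

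The setup I would use: fix a continuous seminorm $p$ on $X$ and $\epsilon > 0$, and let $\sigma: \N \rightarrow \Gamma$ be the bijection witnessing absolute convergence, so that $\sum_{j=0}^{\infty} p(x_{\sigma(j)}) < \infty$. Choose $N_0 \in \N$ with $\sum_{j > N_0} p(x_{\sigma(j)}) < \epsilon/2$, and set $I_0 = \sigma(\{0, 1, \ldots, N_0\}) \in \mathfrak{F}(\Gamma)$. This $I_0$ will serve as the tail index for the Cauchy property of the net.

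The main step is then the estimate: for any finite subsets $I, J \in \mathfrak{F}(\Gamma)$ with $I, J \supseteq I_0$, writing $s_I - s_J = \sum_{\alpha \in I \setminus J} x_{\alpha} - \sum_{\alpha \in J \setminus I} x_{\alpha}$ and applying subadditivity of $p$ gives
\begin{equation*}
p(s_I - s_J) \;\leq\; \sum_{\alpha \in I \setminus I_0} p(x_{\alpha}) \;+\; \sum_{\alpha \in J \setminus I_0} p(x_{\alpha}).
\end{equation*}
Since $\sigma$ is a bijection and $I_0 = \sigma(\{0, \ldots, N_0\})$, every element of $\Gamma \setminus I_0$ equals $\sigma(j)$ for a unique $j > N_0$, so each of the two sums is bounded above by $\sum_{j > N_0} p(x_{\sigma(j)}) < \epsilon/2$. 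Thus $p(s_I - s_J) < \epsilon$, establishing the Cauchy condition.

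The only mildly subtle point, which is where I expect the chief conceptual work to lie, is the mismatch between the linearly-ordered notion of absolute convergence (phrased via a bijection $\sigma$) and the inherently unordered directed set $\mathfrak{F}(\Gamma)$ of partial sums. One must translate an arbitrary finite subset of $\Gamma$ lying outside $I_0$ into a finite set of indices $> N_0$ in $\N$ to invoke the tail of the absolutely convergent series of real numbers. Once $\sigma$ is observed to furnish this translation, the remaining estimates are routine and no further machinery is needed beyond completeness.
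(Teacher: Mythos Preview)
Your proposal is correct and is essentially identical to the paper's own proof: both choose the finite set $I_0 = \{\sigma(0),\ldots,\sigma(N_0)\}$ from a tail estimate on $\sum_j p(x_{\sigma(j)})$, and for finite $I,J\supseteq I_0$ bound $p(s_I - s_J)$ by the two sums over $I\setminus I_0$ and $J\setminus I_0$, each below $\epsilon/2$, concluding that the net of partial sums is Cauchy and hence convergent by completeness.
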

\begin{proof}
Let $\sum_{\alpha \in \Gamma} x_{\alpha}$ be an absolutely convergent series in a complete LCTVS $X$. So, there exists a bijection $\sigma: \N \rightarrow \Gamma$ such that for all continuous seminorm $p$ on $X$, the series of non-negative reals $ \sum_{j=0}^{\infty} p(x_{\sigma(j)})$ converges. Let $\epsilon > 0$ and let $p$ be a continuous seminorm on $X$. Since $ \sum_{j=0}^{\infty} p(x_{\sigma(j)})$ converges, there exists $N \in \N$ such that for all $m, k \in \N$ with $m \geq k > N_0$,
\begin{equation}\label{eq:5}
\sum_{j=k}^{m} p(x_{\sigma(j)}) < \epsilon/2.    
\end{equation}
Let $(\mathfrak{F}(\Gamma), \subset)$ be the directed set of all finite subsets of $\Gamma$ with inclusion as its order. Let $I = \{\sigma(0), \sigma(1),\cdots, \sigma(N_0)\}$, then $I \in \mathfrak{F}(\Gamma)$. Now, whenever $J, K \in \mathfrak{F}(\Gamma)$ with $J, K \supset I$,
\begin{align}
p \bigg ( \sum_{\alpha \in J} x_{\alpha} - \sum_{\alpha \in K} x_{\alpha}\bigg ) \leq p \bigg ( \sum_{\alpha \in J \setminus I} x_{\alpha} - \sum_{\alpha \in K \setminus I} x_{\alpha}\bigg ) &\leq p \bigg ( \sum_{\alpha \in J \setminus I} x_{\alpha} \bigg ) + p \bigg ( \sum_{\alpha \in K \setminus I} x_{\alpha} \bigg ) \nonumber \\
&\leq \sum_{\alpha \in J \setminus I} p(x_{\alpha}) + \sum_{\alpha \in K \setminus I} p(x_{\alpha}) \nonumber \\
& \leq \epsilon/2 + \epsilon/2 = \epsilon, \hspace{1mm} \text{from \eqref{eq:5}}.\nonumber
\end{align}
This shows that the net of partial sums $I \mapsto \sum_{\alpha \in I} x_{\alpha}$ of the series $\sum_{\alpha \in \Gamma} x_{\alpha}$ is Cauchy. Since $X$ is complete, it converges.
\end{proof}
%%%%%%%%%%%%%%%%%%%%%%%%%%%%%%%%%%%%%%%%%%%%%%%%%%%%%%%%%%%%%%%%%%%%%%%%%%%%%% New subsection %%%%%%%%%%%%%%%%%
\subsection{Covering a Reinhardt domain by polyannuli}
Let $\fp \subset \C^n$ be such that 
$$
\fp \coloneqq \big \{ z \in \C^n: r_j < \abs{z_j} < R_j \hspace{2mm} \text{or} \hspace{2mm} \abs{z_j}< R_j \quad \text{for $1\leq j \leq n$} \big \},
$$
where $0< r_j < R_j < \infty$ are reals. The set $\fp$ is said to be a \emph{polyannulus} in $\C^n$.
%%%%%%%%%%%%% lemma %%%%%%%%%%%%%%%%%%%%%%%%
\begin{lemma}\label{lemma6}
Every Reinhardt domain in $\C^n$ is a countable union of polyannuli.
\end{lemma}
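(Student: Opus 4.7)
The plan is to show first that every point of a Reinhardt domain $\Omega$ sits inside some polyannulus contained in $\Omega$, and then use a countability argument (polyannuli with rational parameters) to extract a countable subcovering.

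For the first step, fix $z^0 = (z_1^0,\ldots,z_n^0) \in \Omega$ and partition the indices into $J = \{j : z_j^0 = 0\}$ and $J^c = \{j : z_j^0 \neq 0\}$. By openness of $\Omega$, choose $\delta > 0$ small enough that the polydisc $\{z : \abs{z_j - z_j^0} < \delta \text{ for all } j\}$ is contained in $\Omega$. I then propose the candidate polyannulus
\[
P_\epsilon \coloneqq \prod_{j \in J^c} \{w_j \in \C : \abs{z_j^0} - \epsilon < \abs{w_j} < \abs{z_j^0} + \epsilon\} \times \prod_{j \in J} \{w_j \in \C : \abs{w_j} < \epsilon\},
\]
where $0 < \epsilon < \min\bigl(\delta,\ \min_{j \in J^c} \abs{z_j^0}\bigr)$ ensures that the lower radii are positive. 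Clearly $z^0 \in P_\epsilon$, and $P_\epsilon$ is a polyannulus in the sense defined in the excerpt.

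Next, I would verify $P_\epsilon \subset \Omega$ using the Reinhardt invariance. Given $w \in P_\epsilon$, rotate each coordinate to point in the $z^0$-direction: for $j \in J^c$ set $\lambda_j = (z_j^0 / \abs{z_j^0}) \cdot (\abs{w_j}/w_j)$ when $w_j \neq 0$ (otherwise $\lambda_j = 1$), and for $j \in J$ pick any $\lambda_j$ of modulus one. The rotated point $w' = (\lambda_j w_j)_j$ satisfies $\abs{w_j' - z_j^0} < \epsilon \leq \delta$ for every $j$ (for $j \in J^c$ because $w_j'$ lies on the ray through $z_j^0$ with modulus within $\epsilon$ of $\abs{z_j^0}$, and for $j \in J$ because $z_j^0 = 0$ and $\abs{w_j'} < \epsilon$). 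Hence $w'$ lies in the polydisc, so $w' \in \Omega$, and the Reinhardt property gives $w \in \Omega$.

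Finally, consider the collection $\mathcal{C}$ of all polyannuli in $\C^n$ whose defining radii are positive rationals and which are contained in $\Omega$; this is a countable family. A small perturbation of the $\epsilon$-parameter and of each $\abs{z_j^0}$ by rationals (keeping the same "annulus or disc" type in each slot, determined by $J$) produces a polyannulus in $\mathcal{C}$ that still contains $z^0$ and is contained in $P_\epsilon \subset \Omega$. Thus every point of $\Omega$ belongs to some member of $\mathcal{C}$, which yields $\Omega = \bigcup \mathcal{C}$ as a countable union of polyannuli. The main subtlety is keeping track of the two types of slots (with or without lower bound) when $z^0$ has vanishing coordinates; this is precisely why the definition of polyannulus allows either form in each coordinate.
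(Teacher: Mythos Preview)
Your proof is correct and follows essentially the same strategy as the paper's: find, for each point of $\Omega$, a polyannulus with rational radii containing that point and contained in $\Omega$, then invoke countability of the rational parameters. The only cosmetic difference is that the paper packages your explicit rotation argument as the statement that the Reinhardt shadow map $\varphi(z)=(\lvert z_1\rvert,\dots,\lvert z_n\rvert)$ is open, whereas you carry out the rotation by hand; your version is in fact a bit more explicit about why the resulting polyannulus lies inside $\Omega$.
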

\begin{proof}
Let $\Omega \subset \C^n$ be Reinhardt domain let $a = (a_1, \cdots, a_n) \in \Omega$. Since $\Omega$ is open, there exists $\rho >0$ such that the ball $B_{\rho}(a)$ centered at $a$ with radius $\rho$ is contained in $\Omega$. Denote by $\abs{\Omega}$ the Reinhardt shadow of $\Omega$, that is $ \abs{\Omega} \coloneqq \big \{(\abs{z_1},\cdots,\abs{z_n}) : z \in \Omega \big \}$. Let $\varphi :\Omega \rightarrow \abs{\Omega} $ be the map defined by
\begin{equation}\label{eq:40}
\varphi(z) = (\abs{z_1},...,\abs{z_n}) \quad \text{for $z\in \Omega$}.    
\end{equation}
Since $\varphi$ is an open map, $\varphi(B_{\rho}(a))$ is an open set in $\abs{\Omega}$. So there exist rational numbers $0 < r_j < R_j < \infty$ such that $r_j < \abs{a_j} < R_j$ or $\abs{a_j} < R_j$ for every $1 \leq j \leq n$ and $\varphi(a) \in \prod_{j=1}^n A_j$ where 
\begin{equation*}
 A_j = 
 \begin{cases}
 (r_j, R_j) \hspace{2mm} \text{if $a_j \neq 0$} \\
 [0, R_j)  \hspace{2mm} \text{if $a_j = 0$}.
 \end{cases}
\end{equation*}
Therefore, $a \in \varphi^{-1}(\prod A_j)$, which is a polyannulus
$$
\fp_a = \big \{z \in \Omega: r_j < \abs{z_j} < R_j \hspace{2mm} \text{or} \hspace{2mm} \abs{z_j}< R_j \quad \text{for $1\leq j \leq n$} \big \}.
$$
Consequently $\Omega \subset \bigcup \fp_a$, and the union is a countable one.
\end{proof}

%%%%%%%%%%%%%%%%%%%%%%%%%%%%%%%%%%%%%%%%%%%%% Absolute convergence %%%%%%%%%%%%%%%%%%%%%%%%%%%%%%%%%%%%%%%%%%%%%%%%%%%%%%%%%%%%%%%%%%%%%%%
\section{Absolute convergence of Laurent series for a bounded domain }\label{absoluteconvergence}
%%%%%%%%%%%%%%%%%%%%%%%%%%%%%%%%%%%%%%%%%
For an integer $N \geq 0$, let $Q_N = \{\alpha \in \Z^n : \abs{\alpha}_{\infty} \leq N\}$ denote the \enquote{box} of edge $2N$, so that $Q_N$ has $(2N+1)^n$ points. Construct a bijection $\sigma: \N \rightarrow \Z^n$ by successively enumerating $Q_0, Q_1 \smallsetminus Q_0, Q_2 \smallsetminus Q_1$ and so on. Therefore $\sigma$ satisfies
\begin{equation}\label{eq:40A}
  \begin{cases}
                                   \sigma(0) = 0, \\
                                   \sigma(1), \cdots, \sigma(3^n) \hspace{1.5mm} \text{is an enumeration of $Q_1 \smallsetminus Q_0$}, \\
                                    \text{in general} \hspace{1.5mm} \bigg \{\sigma(j) : (2N-1)^n +1 \leq j \leq (2N+1)^n \bigg \} = Q_N \smallsetminus Q_{N-1}.
 
  \end{cases}
  \end{equation}
Let $M \geq 1$ be an integer and let $M_1$ be the largest integer such that $(2M_1+1)^n \leq M$, so that $M_1 = \floor{\frac{\sqrt[\leftroot{-2}\uproot{2} n]{M} - 1}{2}}$, where $\floor{\cdot}$ is the floor function. Then it is clear that
\begin{equation}\label{eq:40B}
Q_{M_1} \subset \bigg \{\sigma(0), \sigma(1), \cdots , \sigma(M) \bigg \} \subset Q_{M_1 +1}. 
\end{equation}
Let 
\begin{equation}\label{eq:40C}
f = \sum_{\alpha \in \Z^n} c_{\alpha} e_{\alpha}    
\end{equation} 
be the Laurent series expansion of $f \in \Smooth$, where $e_{\alpha}$ denotes the \emph{Laurent monomial} of exponent $\alpha: e_{\alpha}(z) = z^{\alpha} = z_1^{\alpha_1} \cdots z_n^{\alpha_n}$. In this section our goal is to prove the following result.
%%%%%%%%%%%%%%%%%%%%%%%%%%%%%%%%%%%%%%%%%%%%%% Theorem %%%%%%%%%
\begin{theorem}\label{theorem2}
Let $\Omega$ be a bounded Reinhardt domain in $\C^n$ and $f \in \Smooth$. Then the Laurent series of $f$ in \eqref{eq:40C} converges absolutely in the topology of $\Smooth$. In fact, the bijection $\sigma$ in \eqref{eq:40D} can be taken to be the one constructed in \eqref{eq:40A} above.
\end{theorem}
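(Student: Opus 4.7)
By Lemma~\ref{lemma5}, the Fr\'{e}chet topology of $\Smooth$ is generated by the seminorms $\normp{\cdot}_{k,\Omega}$, $k \in \N$. The plan is therefore to prove that for each $k \in \N$,
\begin{equation*}
\sum_{\alpha \in \Z^{n}} \normp{c_{\alpha} e_{\alpha}}_{k,\Omega} < \infty.
\end{equation*}
Since the terms are non-negative reals, the specific enumeration $\sigma$ from \eqref{eq:40A} is immaterial for whether this series converges; its role is that grouping by the box shells $Q_{M} \smallsetminus Q_{M-1}$ meshes naturally with the $\abs{\cdot}_{\infty}$-based seminorms, which I will exploit at the end.

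The engine of the proof is a rapid-decay estimate for the Laurent coefficients of an arbitrary $g \in \Smooth$ (denoted $c_{\alpha}(g)$): for every $N \in \N$ I want to establish
\begin{equation*}
\abs{c_{\alpha}(g)} \cdot \sup_{z \in \Omega}\abs{z^{\alpha}} \leq \frac{C_{N}\,\normp{g}_{N,\Omega}}{(1 + \abs{\alpha}_{\infty})^{N}}, \qquad \alpha \in \Z^{n}.
\end{equation*}
The starting point is the zeroth-order bound $\abs{c_{\alpha}(g)} \cdot \sup_{\Omega}\abs{z^{\alpha}} \leq \sup_{\Omega}\abs{g}$, obtained from the Fourier formula $c_{\alpha}(g)\, t^{\alpha} = (2\pi)^{-n} \int_{[0,2\pi]^{n}} g(te^{i\theta})\, e^{-i\alpha \cdot \theta}\, d\theta$ (valid for $T_{t} \subset \Omega$, hence for every $t$ in the Reinhardt shadow $\abs{\Omega}$) after taking the supremum over $t$. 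To upgrade to the polynomial denominator, I would choose $j^{*}$ with $\abs{\alpha_{j^{*}}} = \abs{\alpha}_{\infty}$ (possible whenever $\alpha \neq 0$) and apply the zeroth-order bound to the Euler iterate $(z_{j^{*}}\partial_{z_{j^{*}}})^{N} g$, whose $\alpha$-th Laurent coefficient is $\alpha_{j^{*}}^{N} c_{\alpha}(g)$. Expanding $(z_{j^{*}}\partial_{z_{j^{*}}})^{N}$ via Stirling numbers as a combination of $z_{j^{*}}^{m}\partial_{z_{j^{*}}}^{m}$ with $m \leq N$, and using the boundedness of $\Omega$ to absorb the $z_{j^{*}}^{m}$ factors, one obtains $\sup_{\Omega}\abs{(z_{j^{*}}\partial_{z_{j^{*}}})^{N} g} \leq C_{N}\normp{g}_{N,\Omega}$. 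A pleasant byproduct: if $\sup_{\Omega}\abs{z^{\alpha}} = \infty$ for some $\alpha$, the estimate forces $c_{\alpha}(g) = 0$, so I need no a priori assumption on which monomials can occur in the Laurent expansion of $f$.

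To bound $\normp{c_{\alpha} e_{\alpha}}_{k,\Omega}$, I expand $D^{\beta} z^{\alpha} = (\alpha)_{\beta}\, z^{\alpha - \beta}$ using the falling factorial $(\alpha)_{\beta} := \prod_{j=1}^{n} \alpha_{j}(\alpha_{j}-1)\cdots(\alpha_{j} - \beta_{j} + 1)$. The key identity is that $c_{\alpha}\,(\alpha)_{\beta}$ is the Laurent coefficient at index $\alpha - \beta$ of $D^{\beta} f \in \Smooth$. Applying the decay estimate to $g = D^{\beta} f$, together with the trivial $\normp{D^{\beta} f}_{N,\Omega} \leq \normp{f}_{N+k,\Omega}$ valid for $\abs{\beta}_{\infty} \leq k$, gives
\begin{equation*}
\abs{c_{\alpha}\,(\alpha)_{\beta}} \cdot \sup_{z \in \Omega}\abs{z^{\alpha-\beta}} \leq \frac{C_{N}\,\normp{f}_{N+k,\Omega}}{(1 + \abs{\alpha - \beta}_{\infty})^{N}}.
\end{equation*}
For $\abs{\alpha}_{\infty} \geq 2k$ we have $\abs{\alpha - \beta}_{\infty} \geq \abs{\alpha}_{\infty}/2$; taking the max over the finitely many $\beta$ with $\abs{\beta}_{\infty} \leq k$ then yields $\normp{c_{\alpha} e_{\alpha}}_{k,\Omega} \leq C'_{N,k}\,\normp{f}_{N+k,\Omega}\, \abs{\alpha}_{\infty}^{-N}$. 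Since the shell $Q_{M}\smallsetminus Q_{M-1}$ contains $O(M^{n-1})$ points, the choice $N = n+1$ makes the tail $\sum_{\abs{\alpha}_{\infty} \geq 2k}$ converge, and the head $\sum_{\abs{\alpha}_{\infty} < 2k}$ is a finite sum.

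The main obstacle will be the decay estimate: it requires justifying the Fourier formula for Laurent coefficients on a bounded Reinhardt domain, and cleanly converting the Euler operator $z_{j^{*}}\partial_{z_{j^{*}}}$ into honest holomorphic derivatives via the Stirling expansion, using boundedness of $\Omega$ to keep everything finite. Once that estimate is in hand, what remains is essentially combinatorial bookkeeping.
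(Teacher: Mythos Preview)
Your argument is correct, and it reaches the goal by a genuinely different route from the paper's.

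The paper first reduces to polyannuli: it proves a covering lemma (Lemma~\ref{lemma6}) writing $\Omega$ as a countable union of polyannuli $\fp$, establishes on each $\fp$ the estimate $\normp{c_\alpha e_\alpha}_{k,\fp}\le M_{\alpha,k}\prod_j(1+R_j^2)\,\normp{f}_{k+2,\fp}$ (Proposition~\ref{proposition6}), and then takes a supremum over the $\fp\subset\Omega$, using boundedness of $\Omega$ to control the $R_j$'s uniformly. The decay factor $M_{\alpha,k}=\prod_j\mu_{\alpha_j-k}$ is obtained by two integrations by parts in each angular variable $\theta_j$ of the Cauchy--Fourier integral, which converts powers of $\alpha_j$ into honest $\partial_{\zeta_j}$-derivatives at the cost of a factor $z_j$. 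Summability then follows because $\sum_\alpha M_{\alpha,k}$ factors as an $n$-fold product of convergent one-dimensional sums.

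You instead work directly on $\Omega$: the Euler operator $(z_{j^*}\partial_{z_{j^*}})^N$ is diagonal on monomials with eigenvalue $\alpha_{j^*}^N$, and its Stirling expansion $\sum_m S(N,m)\,z_{j^*}^m\partial_{z_{j^*}}^m$ lets you bound its sup-norm by $C_N\normp{g}_{N,\Omega}$ using only boundedness of $\Omega$. This bypasses the polyannulus covering entirely and yields the flexible rate $|\alpha|_\infty^{-N}$; with $N=n+1$ the shell count $\#(Q_M\smallsetminus Q_{M-1})=O(M^{n-1})$ gives convergence. Both proofs share the identity $D^\beta(c_\alpha z^\alpha)=c_{\alpha-\beta}(D^\beta f)\,z^{\alpha-\beta}$ for handling the $k\ge1$ seminorms. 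Your approach is more streamlined and conceptually cleaner (no auxiliary covering, and the ``monomials with $c_\alpha\neq0$ are automatically bounded'' observation drops out for free), while the paper's is more elementary in its tools and quantitatively tighter, needing only $\normp{f}_{k+2,\Omega}$ rather than $\normp{f}_{k+n+1,\Omega}$ on the right-hand side.
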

Recall from Section \ref{gensem} that if $\Omega$ is bounded, the collection of seminorms $\{\normp{\cdot}_{k, \Omega} : k \in \N\}$ generates the Fr\'{e}chet topology of $\Smooth$, where for each $k$, the seminorm $\normp{\cdot}_{k, \Omega}$ is as in \eqref{eq:24}. To prove Theorem \ref{theorem2}, we show that if $\sigma$ is taken as in \eqref{eq:40A} above, then for every $k \in \N$, $\sum_{j=0}^{\infty} \normp{c_{\sigma(j)}e_{\sigma(j)}}_{k, \Omega} < \infty$.\\
%%%%%%%%%%%%%%%%%%%%%%%%%%%%%%%%%%%%%%%%%%%%%%
The following proposition is the key to prove Theorem \ref{theorem2}.
%%%%%%%%%%%%%%%%%%%%%%%%%%%%%%%%%%%%%
%%%%%%%Propositio%%%%%%%%%%%%%%%%%%%%%%%%%%%%%%%%%%%%%%
\begin{proposition}\label{proposition6}
Let $\fp$ be a polyannulus in $\C^n$, that is 
$$
\fp \coloneqq \big \{ z \in \C^n: r_j < \abs{z_j} < R_j \hspace{2mm} \text{or} \hspace{2mm} \abs{z_j}< R_j \quad \text{for $1\leq j \leq n$} \big \},
$$
where $0 < r_j < R_j < \infty$ are reals. For an integer $\ell$, consider 
\begin{equation}\label{eq:50}
    \mu_{\ell} =
  \begin{cases}
                                    \dfrac{1}{\ell (\ell -1)} & \text{if $\ell \neq 0,1$} \\
                                   1 & \text{if $\ell = 0,1$}. \\
 
  \end{cases}
  \end{equation}
Let $\alpha = (\alpha_1,\cdots,\alpha_n) \in \Z^n$, $k \in \N$ and $M_{\alpha, k} = \prod_{j=1}^n \mu_{\alpha_j- k}$. Suppose $f = \sum_{\gamma \in \Z^n} c_{\gamma} e_{\gamma}$ is the Laurent series expansion of $f \in \poly$, where $e_{\gamma}$ is the monomial function of exponent $\gamma$. Then
\begin{equation}\label{eq:56}
\normp{c_{\alpha} e_{\alpha}}_{k, \fp} \leq  \bigg ( M_{\alpha, k} \cdot  \prod_{j=1}^n 
(1+ R_j^2) \bigg ) \cdot \normp{f}_{k+2, \fp}.   
\end{equation}
where $R_j$'s are as in the statement and  $\normp{\cdot}_{k, \fp}$ is as in \eqref{eq:24}.
\end{proposition}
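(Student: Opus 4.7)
The plan is to combine the Fourier decomposition arising from the Reinhardt symmetry of $\fp$ with integration by parts in the angular variables. For $z \in \fp$, averaging $f$ over the torus action that preserves $\fp$ gives
\[
c_\alpha e_\alpha(z) = \frac{1}{(2\pi)^n}\int_{[0,2\pi]^n} f(e^{i\theta_1}z_1,\ldots,e^{i\theta_n}z_n)\,e^{-i\alpha\cdot\theta}\,d\theta,
\]
and differentiating under the integral sign, using $D^\beta_z[f(e^{i\theta}z)] = e^{i\beta\cdot\theta}(D^\beta f)(e^{i\theta}z)$, yields
\[
D^\beta(c_\alpha e_\alpha)(z) = \frac{1}{(2\pi)^n}\int (D^\beta f)(e^{i\theta}z)\,e^{i(\beta-\alpha)\cdot\theta}\,d\theta.
\]

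I would then integrate by parts twice in each $\theta_j$, exploiting the identity $\partial_{\theta_j}[(D^\beta f)(e^{i\theta}z)] = iz_je^{i\theta_j}(\partial_j D^\beta f)(e^{i\theta}z)$ together with the $2\pi$-periodicity of the integrand, which kills all boundary terms. Each pair of integrations by parts in direction $j$ is valid provided $\alpha_j \notin \{\beta_j, \beta_j+1\}$, extracts the factor $z_j^2/[(\alpha_j-\beta_j)(\alpha_j-\beta_j-1)]$, and raises the order of the derivative on $f$ by two in the $j$-th direction. Performing this in every coordinate results in an integrand $D^{\beta+(2,\ldots,2)}f(e^{i\theta}z)$, whose absolute value is bounded by $\normp{f}_{k+2,\fp}$ because $|\beta+(2,\ldots,2)|_\infty \leq k+2$. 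Combined with $|z_j|\leq R_j$ on $\fp$, this produces the generic estimate
\[
|D^\beta(c_\alpha e_\alpha)(z)| \leq \prod_{j=1}^n \frac{R_j^2}{|(\alpha_j-\beta_j)(\alpha_j-\beta_j-1)|}\,\normp{f}_{k+2,\fp}.
\]

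Taking the supremum over $|\beta|_\infty\leq k$ and $z\in\fp$, the bound factors over coordinates. For $\alpha_j \geq k+2$, the one-dimensional maximum of $1/|(\alpha_j-\beta_j)(\alpha_j-\beta_j-1)|$ over $0\leq\beta_j\leq k$ is attained at $\beta_j=k$ and equals exactly $\mu_{\alpha_j-k}$, giving a contribution of $R_j^2\mu_{\alpha_j-k} \leq (1+R_j^2)\mu_{\alpha_j-k}$. For $\alpha_j\leq -1$ the same estimate must be sharpened by repeating the calculation starting from the Cauchy/Fourier formula on a torus of radius close to the inner radius $r_j$ rather than bounding $|z_j|$ by $R_j$; the symmetric rearrangement yields $\mu_{\alpha_j-k}$ together with a factor at most $1+R_j^2$. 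For the exceptional indices $\alpha_j\in\{0,1,\ldots,k+1\}$, where $(\alpha_j-\beta_j)(\alpha_j-\beta_j-1)$ vanishes for some admissible $\beta_j$ and the two-IBP argument fails in that coordinate, I would perform only as many IBPs as the falling factorial in $D^{\beta_j}_j z_j^{\alpha_j}$ permits and bound the remaining contribution directly by $|z_j|^{\alpha_j-\beta_j}\leq R_j^{\alpha_j-\beta_j}$. The convention $\mu_\ell=1$ for $\ell\in\{0,1\}$ and the summand $1$ in $1+R_j^2$ are designed precisely to absorb these direct estimates into the same uniform product formula.

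The main obstacle is this mixing of regimes: generic two-IBP bounds in "good" coordinates, an inner-torus refinement for negative $\alpha_j$, and direct case-by-case estimates for the exceptional indices. One must verify that the resulting coordinatewise bound multiplies to precisely the stated constant $M_{\alpha,k}\prod_j(1+R_j^2)\normp{f}_{k+2,\fp}$ without cross-terms between the different regimes.
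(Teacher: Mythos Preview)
Your core strategy---the torus-average formula for $c_\alpha e_\alpha$ followed by two integrations by parts in each angular variable---is exactly what the paper does. The execution differs in two respects, and one of your proposed fixes is a wrong turn.

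First, the paper does not attack $D^\beta(c_\alpha e_\alpha)$ directly for every $\beta$ with $|\beta|_\infty\leq k$. It proves only the case $k=0$ by integration by parts, and then for $k\geq 1$ invokes the identity (which you essentially wrote down but did not exploit)
\[
D^\gamma\bigl(c_\alpha(f)\,e_\alpha\bigr)=c_{\alpha-\gamma}(D^\gamma f)\,e_{\alpha-\gamma}.
\]
The already-established $k=0$ bound applied to $D^\gamma f\in\poly$ gives
\[
\bigl|D^\gamma(c_\alpha e_\alpha)(z)\bigr|\leq M_{\alpha-\gamma,0}\,\prod_{j=1}^n(1+R_j^2)\,\normp{D^\gamma f}_{2,\fp},
\]
and one finishes with $\normp{D^\gamma f}_{2,\fp}\leq\normp{f}_{k+2,\fp}$ together with the coordinatewise comparison $\mu_{\alpha_j-\gamma_j}\leq\mu_{\alpha_j-k}$ for $0\leq\gamma_j\leq k$. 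This organization shrinks the exceptional set dramatically: at level $k=0$ the only special indices are $\alpha_j\in\{0,1\}$ (where one simply performs \emph{no} IBP in that coordinate and sets $\beta_j=0$), not your entire range $\alpha_j\in\{0,\ldots,k+1\}$. There is then no ``mixing of regimes'' to worry about.

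Second, your ``inner-torus refinement'' for $\alpha_j\leq -1$ is not needed and would not help. The two integrations by parts always produce a factor $z_j^{2}$---a \emph{positive} power---regardless of the sign of $\alpha_j$, so the bound $|z_j|^2\leq R_j^2$ applies uniformly and nothing is gained by moving the torus toward the inner radius. The paper treats negative $\alpha_j$ exactly like large positive $\alpha_j$ in the $k=0$ step (two IBPs, factor $\mu_{\alpha_j}\cdot R_j^2\leq \mu_{\alpha_j}(1+R_j^2)$); the passage from $\mu_{\alpha_j-\gamma_j}$ to $\mu_{\alpha_j-k}$ is then handled by the monotonicity inequality above, not by any geometric refinement of the contour.
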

%%%%%%%%%%% Lemma %%%%%%%%%%%%%%%%%%%%%%%%%%%%%%%%%%%%%%%%%%%%

%%%%%%%%%%%%%%%%%%%%%%%%%%%%%%%%%%%%%%%%%%%%%%%%%%%%%%%%%%%%%%%%%%% Proof %%%%%%%%%%%%%%%%%%%%%%%%%%%%%
\begin{proof}
Let $Z = \{z \in \C^n : z_j = 0 \hspace{2mm}\text{for some $j$}\}$ and $z \in \fp \setminus Z$. One can write the coefficient $c_{\alpha} = c_{\alpha}(f)$ of the Laurent series of $f$ using Cauchy formula:
\begin{equation}\label{eq:48}
c_{\alpha} =  \frac{1}{(2 \pi i)^n} \int_{\abs{\zeta_1} = \abs{z_1}} \cdots \int_{\abs{\zeta_n} = \abs{z_n}} \frac{f(\zeta)}{\zeta^{\alpha}} \hspace{1mm} \frac{d\zeta_n}{\zeta_n} \cdots \frac{d\zeta_1}{\zeta_1}
\end{equation}
Note that if $A_j$ is a disc for some $j$, then $c_{\alpha} = 0$ whenever $\alpha_j < 0$. Fix some multi-index notations: $z^{\alpha} = z_1^{\alpha_1} \cdots z_n^{\alpha_n}$ and $z \cdot e^{i \theta} = (z_1 e^{i \theta_1}, \cdots, z_n e^{i \theta_n})$ and use \enquote{vector-like} notation: $\langle \alpha,\theta \rangle = \alpha_1 \theta_1 + \cdots + \alpha_n \theta_n$. Parametrize the contours in \eqref{eq:48} by $\zeta_j = z_j e^{i \theta_j}$ for every $j$ to get 
\begin{equation}\label{eq:48A}
c_{\alpha} =  \frac{1}{(2 \pi)^n} \int_{0}^{2 \pi} \cdots \int_{0}^{2 \pi} \frac{f(z \cdot e^{i\theta})}{z^{\alpha} \exp(i\langle \alpha,\theta \rangle)}\hspace{2mm} d\theta_n \cdots d\theta_1. 
\end{equation}
Consider the case $k =0$. Introduce the multi-index $\beta \in \Z^n$ (depending on $\alpha$) as 
\[
\beta_j = 
\begin{cases}
                     2 & \text{if $\alpha_j \neq 0,1$} \\
                     
                     0 & \text{if $\alpha_j = 0,1$}. \\
\end{cases}
\]
We claim that
\begin{equation}\label{eq:65}
 c_{\alpha} z^{\alpha} = \frac{z^{\beta}}{(2 \pi)^n} \int_{0}^{2 \pi} \cdots \int_{0}^{2 \pi}  \bigg ( \prod_{j=1}^n U(\alpha_j, \theta_j) \bigg ) \frac{\partial^{\abs{\beta}} f}{\partial \zeta_1^{\beta_1}\cdots \partial \zeta_n^{\beta_n}}(z \cdot e^{i \theta}) \hspace{2mm} d\theta_n \cdots d\theta_1,  
\end{equation}
where, for each $1 \leq j \leq n$
\[
  U(\alpha_j, \theta_j) =
  \begin{cases}
                                    \dfrac{e^{-i (\alpha_j -2) \theta_j}}{\alpha_j (\alpha_j -1)} & \text{if $\alpha_j \neq 0,1$} \\
                                   e^{-i \alpha_j \theta_j} & \text{if $\alpha_j = 0,1$}. \\
 
  \end{cases}
\]
To prove \eqref{eq:65}, let us write \eqref{eq:48A} as
\begin{equation}\label{eq:66}
c_{\alpha} z^{\alpha} = \frac{1}{(2 \pi)^n} \int \limits_{ 0}^{2 \pi} \cdots \bigg ( \int \limits_{0}^{2 \pi}   \frac{f(z_1 e^{i \theta_1},\cdots,z_n e^{i \theta_n})}{\exp(i \alpha_n \theta_n)} d\theta_n \bigg ) \frac{d\theta_{n-1}\cdots d\theta_1}{\exp(i(\alpha_1 \theta_1 + \cdots + \alpha_{n-1} \theta_{n-1}))}.  \end{equation}
Let $I_n$ be the integral inside the parentheses in \eqref{eq:66}. That is 
\begin{equation}\label{eq:66D}
I_n =  \int_{0}^{2 \pi} e^{-i\alpha_n \theta_n} \hspace{1mm} f(z \cdot e^{i \theta}) \hspace{1mm} d\theta_n.   
\end{equation}
If $\alpha_n = 0,1$, we set $\beta_n = 0$ and one can write an expression for $I_n$ (in terms of $\beta_n$) from \eqref{eq:66D} as, 
\begin{equation}\label{eq:66a}
I_n =  z_n^{\beta_n} \int_{0}^{2 \pi} e^{-i\alpha_n \theta_n} \hspace{1mm} \frac{\partial^{\beta_n} f}{\partial \zeta_n^{\beta_n}}(z \cdot e^{i \theta}) \hspace{1mm} d\theta_n.  \end{equation}
If $\alpha_n \neq 0,1$, we integrate $I_n$ in \eqref{eq:66D} by parts with respect to $\theta_n$ as follows: take $u= u(\theta_n) = f(z \cdot e^{i \theta})$ and $dv = e^{-i \alpha_n \theta_n} \hspace{1mm}d\theta_n$ in the formula $\int_{0}^{2 \pi} u dv = [uv]_{0}^{2 \pi} - \int_{0}^{2 \pi} v du$ and note that the first term vanishes due to periodicity. We get
\begin{equation}\label{eq:66A}
I_n = - \int_{ 0}^{2 \pi} \frac{e^{-i \alpha_n \theta_n}}{- i \alpha_n} \hspace{2mm}\frac{\partial}{\partial \theta_n}f (z \cdot e^{i \theta})  \hspace{2mm} d\theta_n.
\end{equation}
We use the chain rule: $\frac{\partial}{\partial \theta_n}f (z \cdot e^{i \theta}) = \frac{\partial f}{\partial \zeta_n}(z \cdot e^{i \theta}) \cdot z_n i e^{i \theta_n}$. After simplifying we get 
\begin{equation}\label{eq:66C}
I_n = z_n \int_{0}^{2 \pi} \frac{e^{-i(\alpha_n -1)\theta_n}}{\alpha_n} \hspace{2mm}\frac{\partial f}{\partial \zeta_n}(z \cdot e^{i \theta})  \hspace{2mm} d\theta_n.    
\end{equation}
Using integration by parts again in the same way we have
\begin{equation}\label{eq:66B}
I_n = z_n^2 \int_{0}^{2 \pi} \frac{e^{-i(\alpha_n -2)\theta_n}}{\alpha_n(\alpha_n - 1)} \hspace{2mm}\frac{\partial^2 f}{\partial \zeta_n^2}(z \cdot e^{i \theta})  \hspace{2mm} d\theta_n.    
\end{equation}
Recall that we set $\beta_n = 2$ for $\alpha_n \neq 0,1$. Therefore \eqref{eq:66B} can be rewritten (in terms of $\beta_n$) as
\begin{equation}\label{eq:67}
I_n = z_n^{\beta_n} \int_{0}^{2 \pi} \frac{e^{-i(\alpha_n -2)\theta_n}}{\alpha_n(\alpha_n-1)} \hspace{1mm}\frac{\partial^{\beta_n} f}{\partial \zeta_n^{\beta_n}}(z \cdot e^{i \theta}) \hspace{1mm} d\theta_n. \end{equation}
We substitute the expressions \eqref{eq:66a} or \eqref{eq:67} for $I_n$ in \eqref{eq:66} (depending on the values of $\alpha_n$, and therefore $\beta_n$). Rearranging the terms and the integrals we write \eqref{eq:66} as
\begin{equation}\label{eq:67A}
 c_{\alpha}z^{\alpha}= \frac{ z_n^{\beta_n}}{(2 \pi)^n} \int \limits_{\theta_n} U(\alpha_n, \theta_n)  \int \limits_{\theta_1} \cdots \bigg ( \int \limits_{\theta_{n-1}} \frac{\frac{\partial^{\beta_n} f}{\partial \zeta_n^{\beta_n}}(z \cdot e^{i \theta})}{\exp(i \alpha_{n-1} \theta_{n-1})} d\theta_{n-1} \bigg ) \frac{d\theta_{n-2}\cdots d\theta_1}{\exp(i(\alpha_1 \theta_1 + \cdots +\alpha_{n-2} \theta_{n-2}))} d\theta_n
\end{equation}
Let $I_{n-1}$ be the integral inside the parenthesis in \eqref{eq:67A}. That is 
\begin{equation}\label{eq:67B}
I_{n-1} = \int_{0}^{2 \pi} e^{-i\alpha_{n-1} \theta_{n-1}} \hspace{1mm} \frac{\partial^{\beta_n} f}{\partial \zeta_n^{\beta_n}}(z \cdot e^{i \theta}) \hspace{1mm} d\theta_{n-1}.    
\end{equation}   
Depending on the values of $\alpha_{n-1}$ we repeat the earlier procedure. If $\alpha_{n-1} = 0,1$, we write the expression of $I_{n-1}$ in terms of $\beta_{n-1}$ from \eqref{eq:67B}, otherwise we integrate $I_{n-1}$ in \eqref{eq:67B} by parts with respect to the variable $\theta_{n-1}$ as previous. We substitute the expressions for $I_{n-1}$ in \eqref{eq:67A} and so on. To prove that our claim \eqref{eq:65} is true, we repeat the same procedure $(n-2)$ more times.\\
Now take absolute values on both sides in \eqref{eq:65}. We get
\begin{align}
\abs{c_{\alpha} z^{\alpha}} &= \abs{\frac{z^{\beta}}{(2 \pi)^n} \int_{0}^{2 \pi} \cdots \int_{0}^{2 \pi}  \bigg ( \prod_{j=1}^n U(\alpha_j, \theta_j) \bigg ) \frac{\partial^{\abs{\beta}} f}{\partial \zeta_1^{\beta_1}\cdots \partial \zeta_n^{\beta_n}}(z \cdot e^{i \theta}) \hspace{2mm} d\theta_n \cdots d\theta_1} \nonumber \\
&\leq R^{\beta} \cdot \prod_{j=1}^n \abs{U(\alpha_j , \theta_j)} \cdot  \norm{\frac{\partial^{\abs{\beta}} f}{\partial \zeta_1^{\beta_1}\cdots \partial \zeta_n^{\beta_n}}}_{T} \leq \bigg ( R^{\beta} \prod_{j=1}^n \abs{U(\alpha_j , \theta_j)} \bigg ) \cdot \normp{f}_{2, \fp} \label{eq:101}
\end{align}
where $R^{\beta} = R_1^{\beta_1}\cdots R_n^{\beta_n}$, $T = \{\abs{\zeta_j} = \abs{z_j} : 1 \leq j \leq n\}$ is a torus contained in $\fp$ and $\norm{\cdot}_{T}$ denotes the sup norm on $T$. Note that $R^{\beta} \leq \prod_{j=1}^n (1+ R_j^2)$.
So, it follows from \eqref{eq:101} that for $z \in \fp \setminus Z$,
\begin{equation}\label{eq:103B}
\abs{c_{\alpha} z^{\alpha}} \leq \bigg (\prod_{j=1}^n  \abs{U(\alpha_j, \theta_j)}\bigg ) \cdot  \prod_{j=1}^n (1+ R_j^2) \cdot \normp{f}_{2, \fp} = \bigg ( M_{\alpha, 0} \cdot   \prod_{j=1}^n (1+ R_j^2) \bigg ) \cdot \normp{f}_{2, \fp}
\end{equation}
where we note from \eqref{eq:50} that for each $1 \leq j \leq n$, $\abs{U(\alpha_j, \theta_j)} = \mu_{\alpha_j}$ and therefore $M_{\alpha, 0} = \prod_{j=1}^n \mu_{\alpha_j} = \prod_{j=1}^n \abs{U(\alpha_j, \theta_j)}$.\\
In \eqref{eq:103B} we have proved that the function $z \mapsto c_{\alpha}z^{\alpha}$ is bounded on the set $\fp \setminus Z$. By the Riemann removable singularity theorem (see \cite[p. 32]{MR847923}) we can extend the function holomorphically to $\fp$ and the extended function admits the same bound. This proves our result for the case $k=0$. \\
Let $k \geq 1$ and $\gamma \in \N^n$.  We multiply by $z^{\alpha}$ and apply $D^{\gamma}$ in both sides of \eqref{eq:48A} to get,
\begin{align}
D^{\gamma}(c_{\alpha}(f) z^{\alpha}) &= D^{\gamma} \bigg ( \frac{1}{(2 \pi)^n} \int_{0}^{2 \pi} \cdots \int_{0}^{2 \pi} \frac{f(z \cdot e^{i\theta})}{ \exp(i \langle \alpha, \theta \rangle)}\hspace{2mm} d\theta_n \cdots d\theta_1 \bigg ) \nonumber \\
&= \frac{1}{(2 \pi)^n} \int_{0}^{2 \pi} \cdots \int_{0}^{2 \pi} D^{\gamma} \big ( f (z \cdot e^{i \theta}) \big )\hspace{1mm} e^{-i\langle \alpha, \theta \rangle} \hspace{2mm} d\theta_n \cdots d\theta_1 \nonumber \\
&= \frac{1}{(2 \pi)^n} \int_{0}^{2 \pi} \cdots \int_{0}^{2 \pi} (D^{\gamma} f)(z \cdot e^{i \theta}) \hspace{1mm} e^{i\langle \gamma, \theta \rangle} \hspace{1mm} e^{-i\langle \alpha, \theta \rangle} \hspace{2mm} d\theta_n \cdots d\theta_1 \nonumber \\
&= \frac{1}{(2 \pi)^n} \int_{0}^{2 \pi} \cdots \int_{0}^{2 \pi} (D^{\gamma} f)(z \cdot e^{i \theta}) \hspace{1mm} e^{-i\langle \alpha-\gamma, \theta \rangle} \hspace{2mm} d\theta_n \cdots d\theta_1 \nonumber \\
&= c_{\alpha - \gamma} (D^{\gamma} f) z^{\alpha - \gamma}, \qquad \text{by \eqref{eq:48A}}.
\end{align}
Now, we use the boundedness for the case $k=0$ and the fact that $D^{\gamma}f \in \A^{\infty}(\fp)$ to get,
\begin{equation}\label{eq:104}
\abs{D^{\gamma}(c_{\alpha}(f) z^{\alpha})} = \abs{c_{\alpha - \gamma} (D^{\gamma} f) z^{\alpha - \gamma}} \leq \bigg ( M_{\alpha-\gamma, 0} \hspace{2mm} \prod_{j=1}^n (1 + R_j^2) \bigg ) \cdot \normp{D^{\gamma}f}_{2, \fp}.   
\end{equation}
Let $\gamma$ be such that $\abs{\gamma}_{\infty} \leq k$, that is $\gamma_j \leq k$ for every $1 \leq j \leq n$. So, $\frac{1}{\alpha_j - \gamma_j} \leq \frac{1}{\alpha_j - k}$ and therefore  $\mu_{\alpha_j - \gamma_j} \leq \mu_{\alpha_j - k}$. Now it follows that 
\begin{equation}\label{eq:51}
M_{\alpha-\gamma, 0} = \prod_{j=1}^n \mu_{\alpha_j - \gamma_j} \leq \prod_{j=1}^n \mu_{\alpha_j - k} = M_{\alpha, k}.    
\end{equation}
Moreover,
\begin{align}
\normp{D^{\gamma} f}_{2, \fp} = \sup_{z \in \fp} \bigg \{ \abs{D^{\alpha}(D^{\gamma}f) (z)} : \abs{\alpha}_{\infty} \leq 2\bigg \}
&=  \sup_{z \in \fp} \bigg \{ \abs{D^{\beta}f (z)} : \beta_j \leq \gamma_j + 2 \hspace{2mm} \text{for all $j$}\bigg \} \nonumber \\ 
&=  \sup_{z \in \fp} \bigg \{ \abs{D^{\beta}f (z)} : \abs{\beta}_{\infty} \leq \abs{\gamma}_{\infty} + 2 \bigg \} \nonumber \\ 
&= \normp{f}_{\abs{\gamma}_{\infty} + 2, \fp} \leq \normp{f}_{k+2, \fp}. \label{eq:52}
\end{align}
Therefore from \eqref{eq:104}, \eqref{eq:51} and \eqref{eq:52} we get
\begin{equation}\label{eq:105}
 \sup_{z \in \fp} \abs{D^{\gamma}(c_{\alpha}(f) z^{\alpha})} \leq \bigg ( M_{\alpha, k} \cdot  \prod_{j=1}^n (1+ R_j^2) \bigg ) \cdot \normp{f}_{k+2, \fp} 
\end{equation}
Taking supremum in the left over all $\gamma$ such that $\abs{\gamma}_{\infty} \leq k$ we get the result.
\end{proof}

%%%%%%%%%%%%%%%%%%%%%%%%%%%%%%%%%%%%%%%%%%%%%%%%%%%%% Proof of theorem-1 %%%%%%%%%%%%%%%%%%%%%%%%%%
\begin{proof}[Proof of Theorem \ref{theorem2}]
Let $z \in \Omega$. By Lemma \ref{lemma6}, there exists a polyannulus $\fp \subset \Omega$ such that $z \in \fp$. So, for every $1 \leq j \leq n$, either $\big \{r_j < \abs{z_j} < R_j \big\}$ or $\big \{\abs{z_j} < R_j \big \}$ and $0 < r_j < R_j < \infty$ are rational numbers. Let $f = \sum_{\gamma \in \Z^n} c_{\gamma} e_{\gamma}$ be the Laurent series expansion of a function $f  \in \A^{\infty}(\Omega) \subset \poly$. Let $k \in \N$ and $\alpha \in \Z^n$. Let $ M_{\alpha, k}$ and $\mu_{\alpha_j - k}$ be as in the statement of Proposition \ref{proposition6}. Now, by Proposition \ref{proposition6} we can write
\begin{equation}\label{eq:75}
\normp{c_{\alpha} e_{\alpha}}_{k, \fp} \leq  C_{\alpha, \fp} \cdot \normp{f}_{k+2, \fp}.   
\end{equation}
where the constant $C_{\alpha, \fp} = M_{\alpha, k} \cdot \prod_{j=1}^n (1+ R_j^2) $. Since $\fp \subset \Omega$, $\normp{f}_{k+2, \fp} \leq \normp{f}_{k+2, \Omega}$. Since $\Omega$ is bounded, there exists a finite number $B$ (independent of the polyannulus $\fp$, as long as $\fp \subset \Omega$) such that $\prod_{j=1}^n (1+ R_j^2) \leq B$. Therefore we can write
\begin{equation}\label{eq:78}
\normp{c_{\alpha} e_{\alpha}}_{k, \fp} \leq  C_{\alpha} \cdot \normp{f}_{k+2, \Omega}.   
\end{equation}
 where the constant $C_\alpha = M_{\alpha, k} \cdot B$. We take supremum in the left of \eqref{eq:78} over all $\fp$'s contained in $\Omega$ to get 
\begin{equation}\label{eq:80}
\normp{c_{\alpha} e_{\alpha}}_{k, \Omega} \leq  C_{\alpha} \cdot \normp{f}_{k+2, \Omega}.   
\end{equation}
Recall $\abs{\alpha}_{\infty} \coloneqq \max \big \{ \abs{\alpha_j}, 1 \leq j \leq n \big \}$, where $\alpha \in \Z^n$. Now, for every $N \in \N$
\begin{equation}
\sum_{\abs{\alpha}_{\infty} \leq N} C_{\alpha} = B \cdot \prod_{j=1}^n \big ( \sum_{\alpha_j = -N}^{N} \mu_{\alpha_j-k} \big ).   
\end{equation}
For every $j$, we write $\lim \limits_{N \rightarrow \infty} \sum \limits_{\alpha_j = -N}^{N} \mu_{\alpha_j-k} = \lim \limits_{N \rightarrow \infty} \sum \limits_{\alpha_j = -N}^{-1} \mu_{\alpha_j-k} + \lim \limits_{N \rightarrow \infty} \sum \limits_{\alpha_j = 0}^{N} \mu_{\alpha_j-k}$ and observe that both the terms are finite (using the limit comparison test with the series $\sum_{j=2}^{\infty} \frac{1}{j(j-1)}$). So, $\lim \limits_{N \rightarrow \infty} \sum \limits_{\alpha_j = -N}^{N} \mu_{\alpha_j-k} < \infty$ and therefore 
\begin{equation}\label{eq:79}
\lim_{N \rightarrow \infty} \sum_{\abs{\alpha}_{\infty} \leq N} C_{\alpha} = B \cdot \lim_{N \rightarrow \infty} \prod_{j=1}^n \big ( \sum_{\alpha_j = -N}^{N} \mu_{\alpha_j-k} \big ) < \infty.  
\end{equation}
Let $m \geq 1$ be an integer and let $\sigma: \N \rightarrow \Z^n$ be the bijection constructed in \eqref{eq:40A}. Then it follows from \eqref{eq:40B} that 
\begin{equation}\label{eq:81}
\sum_{\abs{\alpha}_{\infty} \leq \floor{\frac{\sqrt[\leftroot{-2}\uproot{2} n]{m} - 1}{2}}} \normp{c_{\alpha} e_{\alpha}}_{k, \Omega} < \sum_{j=0}^{m}\normp{c_{\sigma(j)} e_{\sigma(j)}}_{k, \Omega} < \sum_{\abs{\alpha}_{\infty} \leq \floor{\frac{\sqrt[\leftroot{-2}\uproot{2} n]{m} - 1}{2}} +1} \normp{c_{\alpha} e_{\alpha}}_{k, \Omega}.   \end{equation}
Now, from \eqref{eq:80} and \eqref{eq:79},
\begin{equation}\label{eq:82}
 \lim_{m \rightarrow \infty}  \sum_{\abs{\alpha}_{\infty} \leq \floor{\frac{\sqrt[\leftroot{-2}\uproot{2} n]{m} - 1}{2}}} \normp{c_{\alpha} e_{\alpha}}_{k, \Omega} < \infty. 
\end{equation}
Therefore from \eqref{eq:81},
\[ \sum_{j=0}^{\infty}\normp{c_{\sigma(j)} e_{\sigma(j)}}_{k, \Omega} = \lim_{m \rightarrow \infty} \sum_{j=0}^{m}\normp{c_{\sigma(j)} e_{\sigma(j)}}_{k, \Omega} < \infty.\]
The absolute convergence follows from here.
\end{proof}
%%%%%%%%%%%%%%%%%%%%%%%%%%%%%%%%%%%%%%%%%%%%%%%%%%%
%%%%%%%New section Proof of main theorem
%%%%%%%%%%%%%%%%%%%%%%%%%%%%%%%%%%%
\section{Proof of main theorem}\label{mainresult}
%%%%%%%%%%%%%%%%%
%%%%%%%%%%%%%%%%%%%%%%%%%%%%%%%%%%%%%%%%%%%%%%%%%%%%%%%%%%%%%%%%%%%%%%%%%%%%%%%%%%%%%% Proof of main theorem
Let $f = \sum_{\alpha \in \Z^n} c_{\alpha} e_{\alpha}$ be the Laurent series expansion of $f \in \Smooth$. Recall from Section \ref{gensem} that one can write $\Omega =  \bigcup_{m \in \N} \Omega_m$, where $\Omega_m$ is bounded for each $m \in \N$. Let $\sigma$ be the bijection constructed in \eqref{eq:40A}. By Theorem \ref{theorem2}, for all integers $m, k \geq 0$, $\sum_{j=0}^{\infty} \normp{c_{\sigma(j)}e_{\sigma(j)}}_{k,\Omega_m} < \infty$, where $\normp{\cdot}_{k, \Omega_m}$ is as in \eqref{eq:24}. Also recall the seminorms $\norm{\cdot}_{k,\Omega_m}$ as in \eqref{eq:1A}, where $k \in \N$ and $\Omega_m$ is bounded. Since $\norm{c_{\sigma(j)}e_{\sigma(j)}}_{k, \Omega_m} \leq \normp{c_{\sigma(j)}e_{\sigma(j)}}_{k, \Omega_m}$, it follows easily that for all integers $m, k \geq 0$, \[\sum_{j=0}^{\infty} \norm{c_{\sigma(j)}e_{\sigma(j)}}_{k,\Omega_m} < \infty.\] But, we saw in Section \ref{gensem} that the collection of seminorms $\big \{ \norm{\cdot}_{k, \Omega_m} : k, m \in \N \big \}$ generates the Fr\'{e}chet topology of $\Smooth$. This proves that the Laurent series of $f$ converges absolutely in $\Smooth$, therefore unconditionally in $\Smooth$ (see Lemma \ref{lemma6A}). So, the series $\sum_{j=0}^{\infty} c_{\sigma(j)} e_{\sigma(j)}$ converges in $\Smooth$. Let 
\begin{equation}\label{eq:82AAA}
g = \lim_{N \rightarrow \infty} \sum_{j=0}^{N} c_{\sigma(j)} e_{\sigma(j)}     
\end{equation}
in $\Smooth$. Since the inclusion map $\Smooth \overset{i}{\hookrightarrow} \fo(\Omega)$ is continuous, the Equation \eqref{eq:82AAA} holds in $\fo(\Omega)$ as well. But, the Laurent series of a holomorphic function $f$ on $\Omega$ converges to $f$ absolutely and uniformly on compact subsets of $\Omega$ (see \cite[p. 46]{MR847923}). Therefore $f = g$, and the proof is complete.

%%%%%%%%%%%%%%%%%%%%%%
\bibliographystyle{abbrv}
\bibliography{ref}

\begin{thebibliography}{10}

\bibitem{MR628348}
D.~Catlin.
\newblock Boundary behavior of holomorphic functions on pseudoconvex domains.
\newblock {\em J. Differential Geometry}, 15(4):605--625 (1981), 1980.

\bibitem{MR3981123}
D.~Chakrabarti.
\newblock On an observation of {S}ibony.
\newblock {\em Proc. Amer. Math. Soc.}, 147(8):3451--3454, 2019.

\bibitem{MR3873548}
D.~Chakrabarti, L.~D. Edholm, and J.~D. McNeal.
\newblock Duality and approximation of {B}ergman spaces.
\newblock {\em Adv. Math.}, 341:616--656, 2019.

\bibitem{MR628971}
J.~B. Garnett.
\newblock {\em Bounded analytic functions}, volume~96 of {\em Pure and Applied
  Mathematics}.
\newblock Academic Press, Inc. [Harcourt Brace Jovanovich, Publishers], New
  York-London, 1981.

\bibitem{MR578928}
M.~Hakim and N.~Sibony.
\newblock Spectre de {$A(\bar \Omega )$} pour des domaines born\'{e}s
  faiblement pseudoconvexes r\'{e}guliers.
\newblock {\em J. Functional Analysis}, 37(2):127--135, 1980.

\bibitem{MR2396710}
M.~Jarnicki and P.~Pflug.
\newblock {\em First steps in several complex variables: {R}einhardt domains}.
\newblock EMS Textbooks in Mathematics. European Mathematical Society (EMS),
  Z\"{u}rich, 2008.

\bibitem{MR1442255}
M.~I. Kadets and V.~M. Kadets.
\newblock {\em Series in {B}anach spaces}, volume~94 of {\em Operator Theory:
  Advances and Applications}.
\newblock Birkh\"{a}user Verlag, Basel, 1997.
\newblock Conditional and unconditional convergence, Translated from the
  Russian by Andrei Iacob.

\bibitem{MR3861907}
J.~D. McNeal and J.~Xiong.
\newblock Norm convergence of partial sums of {$H^1$} functions.
\newblock {\em Internat. J. Math.}, 29(10):1850065, 10, 2018.

\bibitem{MR847923}
R.~M. Range.
\newblock {\em Holomorphic functions and integral representations in several
  complex variables}, volume 108 of {\em Graduate Texts in Mathematics}.
\newblock Springer-Verlag, New York, 1986.

\bibitem{MR385164}
N.~Sibony.
\newblock Prolongement des fonctions holomorphes born\'{e}es et m\'{e}trique de
  {C}arath\'{e}odory.
\newblock {\em Invent. Math.}, 29(3):205--230, 1975.

\bibitem{MR1091512}
K.~H. Zhu.
\newblock Duality of {B}loch spaces and norm convergence of {T}aylor series.
\newblock {\em Michigan Math. J.}, 38(1):89--101, 1991.

\end{thebibliography}

%%%%%%%%%%%%%%%%%%%%%%%

\end{document}